\def\d{\delta}
\newtheorem{priteo}{Theorem}
\newtheorem{lema}{Lemma}
\newtheorem{defi}{Definition}
\title{Hopf bifurcation and heteroclinic cycles in a class of $\mathbb{D}_2-$equivariant systems\thanks{This
        work was supported by FCT grant $SFRH/ BD/ 64374/ 2009$.}}
\author{Adrian C. Murza\thanks{Centro de Matem\'atica da Universidade do Porto, Rua do Campo Alegre 687, 4169-007 Porto, Portugal}}
\begin{document}

\maketitle

\begin{abstract}
In this paper we analyze a generic dynamical system with $\mathbb{D}_2$ constructed via a Cayley graph. We study the Hopf bifurcation and find conditions for obtaining a unique branch of periodic solutions. Our main result comes from analyzing the system under weak coupling, where we identify the conditions for heteroclinic cycle between four equilibria in the two-dimensional fixed point subspace of some of the isotropy subgroups of $\mathbb{D}_2\times\mathbb{S}^1.$ We also analyze the stability of the heteroclinic cycle.
\end{abstract}

\begin{keywords}
equivariant dynamical system, Cayley graph, Hopf bifurcation, heteroclinic cycle.
\end{keywords}

\begin{AMS}
37C80, 37G40, 34C15, 34D06, 34C15
\end{AMS}

\pagestyle{myheadings}
\thispagestyle{plain}
\markboth{Adrian C. Murza}{Hopf bifurcation and heteroclinic cycles in a class of $\mathbb{D}_2-$equivariant systems}
\section{Introduction}
The global dynamics of networks of $n$ coupled oscillators with different types of coupling has been studied in \cite{Ashwin_Swift}. In their formalism it is shown that the symmetry group of the network can be considered a subgroup of $\mathbb{S}_n$, as long as the oscillators taken individually have no internal symmetries. These ideas have been investigated in the above cited paper, for general $\mathbb{D}_n$ and $\mathbb{Z}_n$ cases. However, it is a routine considering $n\geqslant3$ so the case $\mathbb{D}_2$ is sometimes not explicitly taken into account. Another reason for carrying out our study is the fact that among the dihedral groups, $\mathbb{D}_2$ is the only abelian group, which makes its action and therefore its analysis slightly different.

In this paper we are concerned with two properties of networks with $\mathbb{D}_2$ symmetry: Hopf bifurcation and low coupling case leading to heteroclinic cycles.
Firstly we use the methodology developed by Ashwin and Stork \cite{Stork} to construct a network of differential systems with $\mathbb{D}_2$ symmetry. Our case is a particular situation of a network of $n$ coupled oscillators with symmetry to be a subgroup of $\mathbb{S}_n,$ that analyzes different types of coupling between the oscillators, as shown in \cite{Stork}. While this approach has been successfully used for the subgroups $\mathbb{Z}_n$ and $\mathbb{D}_n,$ our approach is interesting not only because of the particularities already mentioned of $\mathbb{D}_2,$ but also because it offers the possibility of analyzing the weak coupling limit, where dynamics of the network is governed only by the phases of the oscillators.\\

A large variety of subgroups of $\mathbb{S}_n$ can be easily generated by the method described by Ashwin and Stork \cite{Stork} based on graph theory. The automorphism group of the colored graph can be defined to be its symmetry group. Therefore, it is possible, via Cayley graphs, to design oscillatory networks with the prescribed symmetry of a subgroup of $\mathbb{S}_n.$
While the Hopf bifurcation is in fact a simple adaptation of the theory developed by Golubitsky, Stewart and Schaeffer in \cite{GS85},\cite{GS86} and \cite{GS88}, the low coupling case is much more interesting since it allows the existence of heteroclinic cycles in systems with $\mathbb{D}_2$ symmetry. In this case it is possible to reduce the asymptotic dynamics to a flow on an four-dimensional torus $\mathbb{T}^4;$ by assuming a weak coupling we average the whole network and introduce an extra $\mathbb{S}^1$ symmetry.

We determine the two-dimensional invariant subspaces on this torus and show that a heteroclinic cycle consisting of four one-dimensional routes among the four zero-dimensional equilibria can appear. We then apply the general stability theory for heteroclinic cycles developed by Krupa and Melbourne in \cite{Krupa}, to analyze the stability of the heteroclinic cycles.

\section{The Cayley graph of the $\mathbb{D}_2$ group}\label{section Cayley}
Our aim is to construct an oscillatory system with the $\mathbb{D}_2$ symmetry. To achieve this goal we need first to represent the group by a Cayley diagram. A Cayley diagram is a graph (that is, a set of nodes and the arrows between them) to represent a group. Vertex or nodes of the graph are the group elements and the arrows show how the generators act on the elements of the group. At any vertex there are arrows pointing towards it, others pointing away. Proceeding as in \cite{ADSW} or \cite{Stork}, let $J\subset\mathbb{D}_2$ be the generating set of $\mathbb{D}_2.$ This implies that
\begin{itemize}
\item[(a)] $J$ generates $\mathbb{D}_2,$
\item[(b)] $J$ is finite and
\item[(c)] $J=J^{-1}.$
\end{itemize}
Since $\mathbb{D}_2$ is finite we can forget assumptions $(b)$ and $(c).$
Then the Cayley graph of $\mathbb{D}_2$, is a directed colored graph whose arrows indicate the action the group elements have on each other and the vertices are the group elements. As shown in \cite{ADSW} the generating set $J$ is a set of "colors" of the directed edges and the group elements $\sigma_i$ and $\sigma_j$ are connected through an edge from $\sigma_i$ to $\sigma_j$ of color $c\in J$ if and only if  $\sigma_i=c\sigma_j.$

\begin{figure}[ht]
\centering
\begin{center}
\includegraphics[width=7.77cm,height=7.72cm]{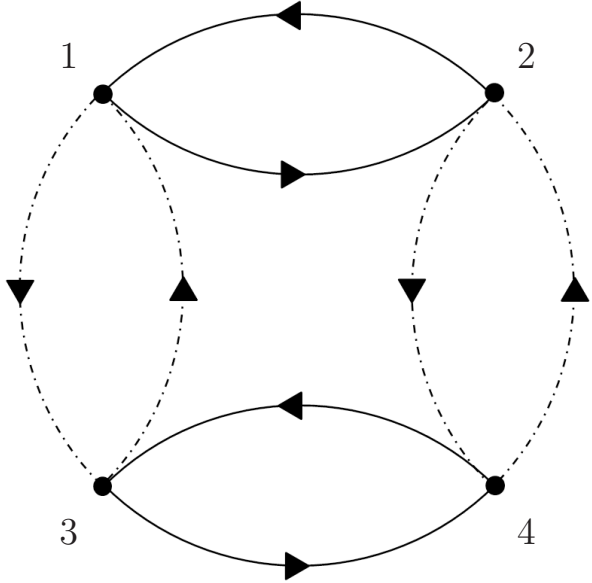}
\caption{A Cayley graph of the $\mathbb{D}_2$ group. Solid arrows represent left-multiplication with $\kappa,$ dot-and-dashed arrows left multiplication with $\zeta,$ the two generators of this group.}\label{second_figure}
\begin{picture}(0,0)\vspace{0.5cm}
\put(-90,235){\Large{$1$}}
\put(83,235){\Large{$2$}}
\put(-90,55){\Large{$3$}}
\put(83,55){\Large{$4$}}
\end{picture}\vspace{-0.3cm}
\end{center}
\end{figure}

The Cayley graphs for $\mathbb{D}_2$ is shown in Figure \eqref{second_figure}. Following the approach by Stork et al. \cite{Stork}, we may identify the vertices as cells with a certain dynamics and the edges as couplings between the cells. In this way we can construct an ODE system which has the symmetry of  $\mathbb{D}_2$. The action of the group $\mathbb{D}_2$ on the cells can be written as
\begin{equation}\label{elements}
\begin{array}{l}
\kappa=(1~2)(3~4)\in\mathbb{S}_4,\\
\zeta=(1~3)(2~4)\in\mathbb{S}_4,
\end{array}
\end{equation}
where the two generators and their commuting product $\kappa\zeta=\zeta\kappa$ act on $(x_1,x_2,x_3,x_4)\in\mathbb{R}^4$ as
\begin{equation}\label{commutators}
\begin{array}{l}
\kappa:(x_1,x_2,x_3,x_4)\rightarrow(x_2,x_1,x_4,x_3)\\
\zeta:(x_1,x_2,x_3,x_4)\rightarrow(x_3,x_4,x_1,x_2)\\
\kappa\zeta:(x_1,x_2,x_3,x_4)\rightarrow(x_4,x_3,x_2,x_1)
\end{array}
\end{equation}

If we assign coupling between cells related by the permutations in \eqref{elements}, we can build the following pairwise system in   with the $\mathbb{D}_2$ symmetry.

\begin{equation}\label{array 4 eq}
\begin{array}{l}
\dot{x}_1=f(x_1)+g(x_2,x_1)+h(x_3,x_1)\\
\dot{x}_2=f(x_2)+g(x_1,x_2)+h(x_4,x_2)\\
\dot{x}_3=f(x_3)+g(x_4,x_3)+h(x_1,x_3)\\
\dot{x}_4=f(x_4)+g(x_3,x_4)+h(x_2,x_4)\\
\end{array}
\end{equation}
where $f:\mathbb{R}\rightarrow\mathbb{R}$ and $g,~h:\mathbb{R}^2\rightarrow\mathbb{R}$. As shown by Ashwin and Stork \cite{Stork} we can think of $f,~g,~h$ as being generic functions that assure that the isotropy of this vector field under the action of $\mathbb{O}_4$ is generically  $\mathbb{D}_2$.

\section{Hopf bifurcation}\label{section Hopf bifurcation}

The group $\mathbb{D}_2$ has order $4$, the generators being $(\kappa,~\zeta).$ Since all irreducible representations of the $\mathbb{D}_2$ group are one dimensional, we restrict our study to the actions of the generators on $z\in\mathbb{C}\equiv\mathbb{R}^2.$ The orthogonal representation of $\mathbb{D}_2$
$$R_{\mathbb{D}_2}:\mathbb{D}_2\times W\rightarrow W$$
on the complex vector space $W$ is irreducible if and only if the only $\mathbb{D}_2-$invariant subspaces of $W$ are the trivial ones. In fact it can be shown that
$$\mathbb{D}_2\times\mathbb{S}^1/\mathrm{ker}R_{\mathbb{D}_2\times\mathbb{S}^1}\equiv\mathbb{Z}_2\times\mathbb{Z}_2\times\mathbb{S}^1/\mathrm{ker}R_{\mathbb{Z}_2\times\mathbb{Z}_2\times\mathbb{S}^1}.$$
The group element $(\kappa,~\zeta)$ acts on the point $z\in\mathbb{C}$ by
\begin{equation}\label{action on C}
\begin{array}{l}
\kappa\cdot z=\bar{z}\\
\zeta\cdot z=\pm z.
\end{array}
\end{equation}
The orbit of a point $(x,y)\in\mathbb{R}^2$ under the action of $\mathbb{D}_2$ is
$$\{(\kappa,\zeta)\cdot z|(\kappa,\zeta)\in\mathbb{D}_2\}.$$
Therefore the orbits are
\begin{itemize}
\item[(a)] The origin, $(0,0),$
\item[(b)] Points on the $x$-axis, $\pm x,0$ with $(x\neq0),$
\item[(c)] Points on the $y$-axis, $0,\pm y$ with $(y\neq0),$
\item[(d)] Points off the axes $\pm x,\pm y$ with $(x\neq0,~y\neq0).$
\end{itemize}

The isotropy subgroup of a point $(x,y)\in\mathbb{R}^2$ under the action of $\mathbb{D}_2$ is
$$\{(\kappa,~\zeta)\in\mathbb{D}_2|(\kappa,~\zeta)\cdot(x,y)=(x,y)\}.$$
$\mathbb{D}_2$ has four isotropy subgroups:
\begin{itemize}
\item[(a)] $\mathbb{D}_2$ corresponding to the origin
\item[(a)] $\mathbb{Z}_2=\{(1,\zeta)\}$ corresponding to $(x,0)$ with $x\neq0,$
\item[(a)] $\mathbb{Z}_2=\{(\kappa,1)\}$ corresponding to $(0,y)$ with $y\neq0,$
\item[(a)] $\mathbbm{1}$ corresponding to $(x,y)$ with $x\neq0,~y\neq0.$
\end{itemize}

In this section we basically recall the theory of Golubitsky and Stewart \cite{GS85} and \cite{GS88} on one parameter Hopf bifurcations with symmetry.

\subsection{General considerations on Hopf bifurcation with $\mathbb{D}_2\times\mathbb{S}^1-$symmetry}

The group $\mathbb{D}_2$ only has four $1-$dimensional real irreducible representations and so they are absolutely irreducible. One of them is the trivial representation and the other three are non-isomorphic each with a nontrivial $\mathbb{Z}_2$ action. Therefore, the only possible way to have a Hopf bifurcation is for the linearization at the equilibrium to be from a $\Gamma-$simple subspace of the form $V\oplus V$ where $V$ is an absolutely irreducible representation which has either a trivial or a $\mathbb{Z}_2$ action. Generically, as a parameter is varied, the critical eigenspace with purely imaginary eigenvalues is two-dimensional of the form $V\oplus V$ described above. The periodic solution bifurcating is unique and has the same symmetry has the $\mathbb{Z}_2$ action on V, or has no symmetry.

In fact, the $\mathbb{D}_2$ symmetric coupled system does not support any periodic solution. One can easily show that the isotypic decomposition at a trivial steady-state is made up of the trivial representation $(1,1,1,1)$ and the complement decomposes into the remaining three non-isomorphic irreducible representations. Therefore, the $V\oplus V$ subspace cannot exist in the tangent space at the trivial equilibrium. Each cell in the network needs to be two-dimensional to hope to have a Hopf bifurcation with $\mathbb{D}_2$ symmetry in such a $4-$cell system.

\begin{priteo}
For the $\mathbb{D}_2-$equivariant Hopf bifurcation in $\mathbb{C}$, there is a unique branch of periodic solutions consisting of rotating waves with $\mathbb{Z}_2$ spatial symmetry.
\end{priteo}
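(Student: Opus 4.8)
The plan is to derive the statement from the Equivariant Hopf Theorem of Golubitsky and Stewart \cite{GS85,GS88}. That theorem converts the existence and uniqueness of a bifurcating branch of periodic orbits into a purely group-theoretic computation: after a Liapunov--Schmidt reduction (or passage to Birkhoff normal form) the reduced bifurcation equations commute with the enlarged group $\mathbb{D}_2 \times \mathbb{S}^1$, where the extra $\mathbb{S}^1$ is the phase-shift symmetry, and one then classifies the isotropy subgroups $\Sigma \subset \mathbb{D}_2 \times \mathbb{S}^1$ whose fixed-point subspace in the critical eigenspace is two-dimensional (the $C$-axial subgroups). Each such $\Sigma$ furnishes exactly one branch of small-amplitude periodic solutions with spatio-temporal symmetry $\Sigma$. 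So the entire proof reduces to showing that, for the action on the critical eigenspace $\mathbb{C}$, there is precisely one $C$-axial subgroup and that its purely spatial part is a $\mathbb{Z}_2$.

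First I would fix the representation carrying the purely imaginary eigenvalues. By the structural remarks already made, this is a $\Gamma$-simple space $V \oplus V \cong \mathbb{C}$ in which $V$ is one of the three nontrivial one-dimensional characters $\chi \colon \mathbb{D}_2 \to \{\pm 1\}$ established above; the two generators therefore act on $z \in \mathbb{C}$ as the \emph{real} scalars $\chi(\kappa),\chi(\zeta) \in \{\pm 1\}$, while $\theta \in \mathbb{S}^1$ acts by $z \mapsto e^{i\theta}z$. The decisive structural fact, and precisely the feature that distinguishes $\mathbb{D}_2$ from the nonabelian $\mathbb{D}_n$, is that this representation factors through $\mathbb{S}^1$: every element acts as multiplication by the single unit complex number $\chi(\gamma)e^{i\theta}$.

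Next I would read off the isotropy lattice of $\mathbb{D}_2 \times \mathbb{S}^1$ on $\mathbb{C}$. The origin has full isotropy and zero-dimensional fixed space, so it is irrelevant. For $z \ne 0$ the isotropy is $\Sigma = \{(\gamma,\theta): \chi(\gamma)e^{i\theta}=1\}$, which does not depend on $z$; taking $\chi(\kappa)=-1,\ \chi(\zeta)=+1$ for definiteness gives $\Sigma = \{(1,0),(\zeta,0),(\kappa,\pi),(\kappa\zeta,\pi)\} \cong \mathbb{Z}_2\times\mathbb{Z}_2$. Because $\Sigma$ is exactly the kernel of the action, its fixed-point subspace is all of $\mathbb{C}$, so $\dim_{\mathbb{R}}\mathrm{Fix}(\Sigma)=2$ and $\Sigma$ is $C$-axial; as the group is abelian there are no further conjugates and no competing isotropy type with two-dimensional fixed space. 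Its purely spatial part $\Sigma \cap (\mathbb{D}_2 \times \{0\}) = \{(1,0),(\zeta,0)\}$ is a $\mathbb{Z}_2$, whereas the twisted elements $(\kappa,\pi)$ encode the relation $\kappa\cdot u(t)=u(t+T/2)$ characteristic of a rotating wave. Feeding this unique $C$-axial subgroup into the Equivariant Hopf Theorem then produces one and only one branch, of the asserted symmetry type; that this branch is genuinely unique among all small periodic orbits can be confirmed directly by writing the $\mathbb{S}^1$-equivariant normal form $\dot z = z\, f(|z|^2,\lambda)$ and noting that its nonzero relative equilibria form the single curve $|z|=r(\lambda)$.

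The step I expect to be delicate is the verification of $C$-axiality in this degenerate low-dimensional setting, where $\mathrm{Fix}(\Sigma)$ is the entire eigenspace rather than a proper line, so one must check that this really meets the hypothesis of the Equivariant Hopf Theorem and is not an artefact of the representation. A closely related subtlety is making certain that the branch count is exactly one: the vocabulary of rotating and standing waves is suggestive of an $\mathbf{O}(2)$ picture, but here $\mathbb{D}_2$ is abelian and its action collapses into $\mathbb{S}^1$, so the reflection-type (standing-wave) branch that would arise for $\mathbf{O}(2)$ or $\mathbb{D}_n$ has \emph{no} $C$-axial isotropy subgroup to support it. Establishing that absence is what ultimately secures the uniqueness claimed in the statement.
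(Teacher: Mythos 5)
Your proposal is correct and takes essentially the same route as the paper: the paper's proof is a two-sentence appeal to the Equivariant Hopf Theorem together with the Abelian Hopf Theorem of Filipsky and Golubitsky (uniqueness coming from the fact that $\mathbb{D}_2$ is abelian), which is exactly the skeleton you flesh out. Your explicit identification of the unique $C$-axial subgroup $\Sigma=\{(1,0),(\zeta,0),(\kappa,\pi),(\kappa\zeta,\pi)\}$ as the kernel of the action (so that $\mathrm{Fix}(\Sigma)=\mathbb{C}$, with spatial part $\mathbb{Z}_2=\{1,\zeta\}$ and the twisted element $(\kappa,\pi)$ giving the rotating wave), plus the $\mathbb{S}^1$-equivariant reduced-equation argument for uniqueness, supplies in detail precisely what the paper delegates to its citations.
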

\begin{proof}
First, unicity of a branch of periodic solutions is based on the fact that $\mathbb{D}_2$ is abelian, and we recall the Abelian Hopf Theorem in \cite{AbelianHopf}. Therefore, the periodic solution predicted by the Equivariant Hopf Theorem \cite{GS88} with the symmetry of the unique isotropy subgroup with a two-dimensional fixed point subspace is the only one.
\end{proof}\\

\section{Normal form and generic linearized stability}
We begin with recalling results on the normal form for the $\mathbb{D}_2-$equivariant bifurcation from \cite{GS85} on the $\mathbb{Z}_2\times\mathbb{Z}_2$ group, which is isomorphic to $\mathbb{D}_2.$ This means that we can adapt the theory of $\mathbb{Z}_2\times\mathbb{Z}_2$ to our case, withe merely a re-interpretation of the branches. We say that the bifurcation problem $g$ commutes with $\mathbb{D}_2$ if
\begin{equation}\label{commut}
g((\kappa,\zeta)\cdot(x,y),\lambda)=(\kappa,\zeta)\cdot g(x,y,\lambda).
\end{equation}
\begin{lema}\label{lema normal form}
Let us consider the $g:\mathbb{C}\times\mathbb{R}\rightarrow\mathbb{R}$ bifurcation problem with $z=x+iy$ commuting with the action of $\mathbb{D}_2$. Then there exist smooth functions $p(u,v,\lambda),~q(u,v,\lambda)$ such that\\
\begin{equation}\label{eq lema normal form}
\begin{array}{l}
g(x,y,\lambda)=(p(x^2,y^2,\lambda)x,~q(x^2,y^2,\lambda)y),~~\mathrm{where}\\
\\
\hspace{2cm}p(0,0,0)=0,~~~q(0,0,0)=0.
\end{array}
\end{equation}
\end{lema}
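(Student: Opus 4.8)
The plan is to recognize this as the classical $\mathbb{Z}_2\times\mathbb{Z}_2$ steady-state normal form of Golubitsky and Stewart \cite{GS85} and to transport it across the isomorphism $\mathbb{D}_2\cong\mathbb{Z}_2\times\mathbb{Z}_2$ already used above. First I would make the action on $(x,y)$ fully explicit. From \eqref{action on C} the generator $\kappa$ acts by complex conjugation, $(x,y)\mapsto(x,-y)$, while the nontrivial branch of $\zeta\cdot z=\pm z$ gives $(x,y)\mapsto(-x,-y)$; their product $\kappa\zeta$ then acts by $(x,y)\mapsto(-x,y)$. Hence the four group elements realize precisely the standard $\mathbb{Z}_2\times\mathbb{Z}_2$ action by independent sign changes $(x,y)\mapsto(\pm x,\pm y)$ on the two coordinates.

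Next I would translate the equivariance relation \eqref{commut} into parity conditions on the two components $g=(g_1,g_2)$. Writing out the identity for $\kappa$ shows that $g_1$ is even in $y$ while $g_2$ is odd in $y$; writing it out for $\zeta$ and combining with the previous line shows that $g_1$ is odd in $x$ while $g_2$ is even in $x$. The genuinely nontrivial step is then to upgrade these parity statements to the claimed smooth dependence on $u=x^2$ and $v=y^2$. For this I would invoke Whitney's theorem on even functions: a smooth function even in a variable factors smoothly through its square, and one odd in a variable equals that variable times a smooth even function. Applied to $g_1$ (odd in $x$, even in $y$) this produces a smooth $p$ with $g_1=p(x^2,y^2,\lambda)\,x$, and applied to $g_2$ a smooth $q$ with $g_2=q(x^2,y^2,\lambda)\,y$. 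I expect this factorization, rather than the algebra of the sign conditions, to be the main obstacle, since it is the only place where the regularity of the coefficient functions $p$ and $q$ is actually established; everything else is bookkeeping.

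Finally, the normalization $p(0,0,0)=q(0,0,0)=0$ follows from the defining properties of a bifurcation problem. Since $g$ vanishes at the origin together with its linearization, $(dg)_{0,0}=0$, and since the factored form gives Jacobian entries $\partial_x g_1=p(0,0,0)$ and $\partial_y g_2=q(0,0,0)$ at the origin with the off-diagonal derivatives vanishing automatically by parity, the vanishing of $(dg)_{0,0}$ forces $p(0,0,0)=q(0,0,0)=0$.
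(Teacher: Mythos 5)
Your proposal is correct and takes essentially the same route as the paper: both translate the $\mathbb{D}_2$-equivariance \eqref{commut} into the parity conditions ($g_1$ odd in $x$ and even in $y$, $g_2$ even in $x$ and odd in $y$) and then factor smoothly through $u=x^2,\ v=y^2$, a step the paper delegates to the $\mathbb{Z}_2\times\mathbb{Z}_2$ argument in \cite{GS85} while you name it explicitly as Whitney's even/odd factorization theorem. Your added verification of the normalization $p(0,0,0)=q(0,0,0)=0$ from the vanishing linearization is a detail the paper leaves implicit in the citation, but it does not change the method.
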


\begin{proof}
The proof is almost identical to the one for $\mathbb{Z}_2\times\mathbb{Z}_2$ in \cite{GS85} but we prefer to give it adapted to the action of the group generators of $\mathbb{D}_2.$
We write $g$ as
\begin{equation}\label{eq lema g}
g(x,y,\lambda)=(a(x,y,\lambda),~b(x,y,\lambda)).
\end{equation}
Commutativity with equation \eqref{commut} implies
\begin{equation}\label{eq lema normal form2}
\begin{array}{l}
a(\kappa x,\zeta y,\lambda)=\kappa a(x,y,\lambda)\\
\\
b(\kappa x,\zeta y,\lambda)=\zeta b(x,y,\lambda)
\end{array}
\end{equation}
Now $\kappa$ transforms $z$ into $\bar{z}$, i.e. $(x,y)\rightarrow(x,-y)$ and if the action of $\zeta$ is $+1$ then we get that $a$ is odd in $x$ while $b$ is even in $x.$ When $\kappa$ acts as identity and $\zeta$ acts as $-1$ then $a$ is even in $y$ and $b$ is odd in $y.$ Now the rest of the proof is exactly the same as in \cite{GS85}.
\end{proof}\\
\\
In the following we will follow again the route and results in \cite{GS85} to obtain the Jacobian matrix of equation \eqref{eq lema normal form}
\begin{equation}\label{equation dg}
dg=
\begin{array}{l}
\begin{bmatrix}
p+2up_u&2p_vxy\\
2q_uxy&q+2vq_v
\end{bmatrix}
\end{array}.
\end{equation}
What is more, we can reformulate or adapt to our case Lemma $3.1$ (page $427$ in \cite{GS85}) and add the proof again corresponding to our case, which in the cited reference has been left as an exercise for the reader.
\begin{lema}\label{lema trivial}
Let $(x,y,\lambda)$ be a solution to $g=0.$
\begin{itemize}
\item[(a)] If $f(x,y,\lambda)$ is a trivial or a pure mode solution, then $dg$ in \eqref{equation dg} is diagonal and its eigenvalues which are real have the signs listed as follows:
    \begin{itemize}
    \item [(i)] Trivial solution: $\mathrm{sgn}~p(0,0,\lambda),~\mathrm{sgn}~q(0,0,\lambda);$
    \item [(ii)] $x-$mode solution: $\mathrm{sgn}~p_u(x,0,\lambda),~\mathrm{sgn}~q(x,0,\lambda);$
    \item [(iii)] $y-$mode solution: $\mathrm{sgn}~p(0,y,\lambda),~\mathrm{sgn}~q_v(0,y,\lambda);$
    \end{itemize}
\item[(b)] If $f(x,y,\lambda)$ is a mixed mode solution, then
\begin{itemize}
\item[(a)] $\mathrm{sgn~det}(dg)=\mathrm{sgn}(p_uq_v-p_vq_u)$ at $(x,y,\lambda);$
\item[(b)] $\mathrm{sgn~tr}(dg)=\mathrm{sgn}(up_u+vq_u)$ at $(x,y,\lambda).$
\end{itemize}
\end{itemize}
\end{lema}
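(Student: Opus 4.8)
The plan is to prove both parts by direct substitution into the Jacobian $dg$ displayed above, using two structural facts together with the sign conventions $u=x^2\geqslant0$ and $v=y^2\geqslant0$. The first fact is that the off-diagonal entries of $dg$, namely $2p_vxy$ and $2q_uxy$, vanish as soon as either $x=0$ or $y=0$; the second is that since $g=(px,\,qy)$, any solution of $g=0$ with $x\neq0$ must satisfy $p=0$ at that point, and any solution with $y\neq0$ must satisfy $q=0$.

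For part (a) every case has at least one vanishing coordinate, so $dg$ is diagonal and its (automatically real) eigenvalues are the diagonal entries $p+2up_u$ and $q+2vq_v$. At the trivial solution $x=y=0$ gives $u=v=0$, leaving the entries $p(0,0,\lambda)$ and $q(0,0,\lambda)$. For the $x$-mode ($y=0$, $x\neq0$) I would invoke $p=0$ from $g=0$, so the first entry collapses to $2up_u$; since $u>0$ its sign is $\mathrm{sgn}\,p_u$, while the second entry is simply $q$ evaluated at $(x,0,\lambda)$. The $y$-mode is the mirror image: $q=0$ forces the second entry to $2vq_v$ with sign $\mathrm{sgn}\,q_v$, and the first entry is $p$ at $(0,y,\lambda)$. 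This reproduces the three sign lists.

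For part (b) the mixed mode has $x\neq0$ and $y\neq0$, so $g=0$ now forces both $p=0$ and $q=0$. Substituting into the determinant gives $\det(dg)=(2up_u)(2vq_v)-(2p_vxy)(2q_uxy)=4uv\,(p_uq_v-p_vq_u)$, and because $uv>0$ off the axes the sign equals $\mathrm{sgn}(p_uq_v-p_vq_u)$. Likewise $\mathrm{tr}(dg)=2up_u+2vq_v$, whose sign is $\mathrm{sgn}(up_u+vq_v)$; I read the $vq_u$ printed in the statement as a misprint for $vq_v$.

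I do not expect any genuine analytic obstacle: the entire argument reduces to bookkeeping of which entries drop out under $g=0$ and to the single observation that $u$ and $v$ are strictly positive away from the axes, which is precisely what licenses discarding those positive factors when only signs are wanted. The one point requiring mild care is not to confuse ``$p=0$ at the solution'' with ``$p_u=0$''; the vanishing of $p$ (resp.\ $q$) is what removes the undifferentiated diagonal term, while the surviving term carries the derivative.
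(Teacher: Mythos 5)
Your proposal is correct, and for part (a) it is essentially the paper's own argument: case-by-case substitution into the Jacobian \eqref{equation dg}, with the diagonal entries collapsing because $x=0$ or $y=0$ kills the off-diagonal terms and the solution condition ($p=0$ for the $x$-mode, $q=0$ for the $y$-mode) removes the undifferentiated diagonal term. For part (b) your route is slightly different and in fact cleaner than the paper's: you compute $\det(dg)=4uv(p_uq_v-p_vq_u)$ and $\mathrm{tr}(dg)=2(up_u+vq_v)$ directly and discard the positive factors $uv>0$, which proves exactly the two sign claims in the statement. The paper instead detours through an explicit eigenvalue formula \eqref{eigenvalues normal form} and a stability table, and that formula contains an algebraic slip: the discriminant of the matrix \eqref{equation dg5} is $(up_u+vq_v)^2-4uv(p_uq_v-p_vq_u)$, not $(up_u+vq_v)^2+4uvp_uq_v$ as printed, i.e.\ the off-diagonal contribution $p_vq_u$ has been dropped, so the paper's Table \eqref{table sign jacobian} is only valid under that implicit omission while your computation needs no such caveat. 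You are also right that the $vq_u$ in part (b)(b) of the statement is a misprint for $vq_v$; this matches the original Lemma 3.1 in Golubitsky--Schaeffer, and your trace computation confirms it.
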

\begin{proof}
By using the terminology in \cite{GS85}, we have
\begin{itemize}
\item[(a)] trivial solution when $x=y=0,$
\item[(b)] $x-$mode solution when $p(u,0,\lambda)=y=0,~x\neq0,$
\item[(c)] $y-$mode solution when $q(0,v,\lambda)=x=0,~y\neq0,$
\item[(d)] mixed-mode solution when $p(u,v,\lambda)=q(u,v,\lambda)=0,~x\neq0,~y\neq0.$
\end{itemize}
Accordingly, if $x=y=0$ equation \eqref{equation dg} reduces to
\begin{equation}\label{equation dg1}
dg=
\begin{array}{l}
\begin{bmatrix}
p&0\\
0&q
\end{bmatrix}
\end{array}
\end{equation}
in the first case. Moreover, since $y=0$ for the $x-$mode solution, then $v=0,~x\neq0$ and therefore the Jacobian matrix becomes
\begin{equation}\label{equation dg2}
dg=
\begin{array}{l}
\begin{bmatrix}
2up_u&0\\
0&q
\end{bmatrix}
\end{array}.
\end{equation}
Similarly, since $x=0$ for the $y-$mode solution, then $u=0,~y\neq0$ and therefore the Jacobian matrix becomes
\begin{equation}\label{equation dg4}
dg=
\begin{array}{l}
\begin{bmatrix}
p&0\\
0&2vq_v
\end{bmatrix}
\end{array}.
\end{equation}

Finally, when we deal with the mixed-mode solutions then the Jacobian matrix becomes
\begin{equation}\label{equation dg5}
dg=
\begin{array}{l}
\begin{bmatrix}
2up_u&2p_vxy\\
2q_uxy&2vq_v
\end{bmatrix}
\end{array},
\end{equation}

and the eigenvalues are
\begin{equation}\label{eigenvalues normal form}
\begin{array}{l}
\lambda_{1,2}=\displaystyle{\frac{2up_u+2vq_v\pm2\sqrt{u^2p_u^2+6uvp_uq_v+v^2q_v^2}}{2}}=\\
\\
\hspace{3cm}up_u+vq_v\pm\sqrt{\left(up_u+vq_v\right)^2+4uvp_uq_v}.
\end{array}
\end{equation}
Now since $u=x^2$ and $v=y^2$ we explicitly derive the conditions for stability/unstability of the linearized system, in Table \eqref{table sign jacobian}.
\begin{table}
\centering
\begin{center}
\caption{Mixed-mode solutions of the $\mathbb{D}_2-$equivariant normal of equation \eqref{eq lema normal form}.}\label{table sign jacobian}
\end{center}
\begin{tabular}{cccc}
\toprule
$(p_u,q_v)$ & $up_u+vq_v$& $\left(up_u+vq_v\right)^2+4uvp_uq_v$&Stability \\
\midrule
\centering
$(+,+)$ &+&+&unstable\\
$(-,-)$ &-&+&stable\\
$(+,-)$& +&+&unstable\\
$(+,-)$& +&-&unstable\\
$(+,-)$& -&+&stable\\
$(+,-)$& -&-&stable\\
$(-,+)$& +&+&unstable\\
$(-,+)$& +&-&unstable\\
$(-,+)$& -&+&stable\\
$(-,+)$& -&-&stable\\
\bottomrule
\end{tabular}
\end{table}
\end{proof}

\section{Weak Coupling}\label{section Weak Coupling}

We can think on ideal physical systems as interactions between identical or nearly identical subsystems. Moreover, as in \cite{Ashwin_Swift}, the whole system can be described as being a perturbation of an uncoupled system. This can be thought of as continuous path between the system and a system that is a product of several dynamical systems. In the following we will assume that our system of oscillators has a weak coupling limit. In fact, as it has been shown in Ashwin and Swift \cite{Ashwin_Swift}, even strongly coupled oscillator systems must have a weakly coupled limit. Moreover, we will assume that our system is formed by dissipative oscillators, so that the periodic orbit is attracting and unique in some neighborhood. In the weak coupling case, we focus on the dynamics of the relative phases of the oscillators. This situation can be better understood when in the no coupling case there is an attracting $N-$torus with one angle for each oscillator. An apparent problem can arise when considering phase differences; however we can choose coordinates so that the dynamics is a linear flow in the direction and there is no change in the phase differences. The theory developed in \cite{Ashwin_Swift} shows that the torus is normally hyperbolic, so with small coupling, the torus persists and there is a slow evolution of the phase differences. Moreover, it has been pointed out that the weak coupling limit offers different information about the dynamics rather than analyzing only the small amplitude periodic solutions near the Hopf bifurcation point.

Our system \eqref{array 4 eq} can be rewritten under weak coupling case as an ODE of the form:
\begin{equation}\label{generic weak coupling equation}
\dot{x}_i=f(x_i)+\epsilon g_i(x_1,\ldots,x_4)
\end{equation}
for $i=1,\ldots,4,~x_i\in \mathcal{X}$ and commuting with the permutation action of $\mathbb{D}_2$ on $\mathcal{X}^4,$ both $f$ and $g_i$ being of the class $\mathcal{C}^{\infty}.$ The constant $\epsilon$ represents the coupling strength and it is assumed to have low values. As in \cite{Ashwin_Swift}, \cite{ADSW} or \cite{Stork} we assume $\dot{x}$ has an hyperbolic stable limit cycle.\\

As shown in \cite{Ashwin_Swift} in the case of weak coupling, there is a natural reason why we should not just look at irreducible representations of $\mathbb{D}_2$. In our case there are $4$ stable hyperbolic limit cycles in the limit of $\epsilon=0,$ which means that the asymptotic dynamics of the system factors into the asymptotic dynamics of four limit cycles. This way as it we show later, we can for example embed the flow of a $2-$dimensional torus on a four-dimensional torus $\mathbb{T}^4.$
Moreover we assume hyperbolicity of the individual limit cycles for small enough values of the coupling parameter, and this justifies expressing the dynamics of the system as an ODE in terms of four phases, i.e. an ODE on $\mathbb{T}^4$  which is $\mathbb{D}_2-$equivariant.
\begin{table}
\centering
\begin{center}
\caption{Isotropy subgroups and fixed point subspaces for the $\mathbb{D}_2\times\mathbb{S}^1$ action on $\mathbb{T}^4$. The generators are $\kappa=\{(0~1)(2~3)\}$ and $\zeta=\{(0~2)(1~3)\}.$ The isotropy subgroups act by reflection about the zero, one and two-dimensional manifolds on $\mathbb{T}^4.$ For example, by notation $\mathbb{Z}^{\phi}_2(\kappa_{0,0})$ we mean the group $\mathbb{Z}_2$ acting through reflection about the circle $x_0=x_1=0,x_2=x_3=\phi,$ while $\mathbb{Z}^{\phi_i}_2(\kappa_{0,0})$ whith $i=\{1,2\}$ acts by reflection about the disk $x_0=x_1=0,x_2=\phi_1,x_3=\phi_2$.$~(\phi,~\phi_1,~\phi_2\in\left[0,\pi\right])$.}\label{table grande}
\end{center}
\begin{tabular}{ccccc}
\toprule
$\Sigma$ & $\mathrm{Fix}(\Sigma)$ & Generators & $\mathrm{dim~Fix}(\Sigma)$\\
\midrule
\centering
$\mathbb{D}_2(0)$ & $(0,0,0,0)$ & $\{\kappa,~\zeta\}_{(0)}$ &0\\
$\mathbb{D}_2(\kappa,\mathrm{Id})$ & $(0,0,\pi,\pi)$ & $\{\kappa,~\mathrm{Id}\}_{(0,\pi)}$ &0\\
$\mathbb{D}_2(\kappa\zeta,\kappa)$ & $(0,\pi,0,\pi)$ & $\{\kappa\zeta,~\kappa\}_{(0,\pi)}$ &0\\
$\mathbb{D}_2(\pi)$& $(\pi,0,0,\pi)$ & $\{\kappa,~\zeta\}_{(\pi)}$ &0\\
$\mathbb{Z}^{\phi}_2(\kappa_{(0,0)})$ & $(0,0,\phi,\phi)$ & $\{(0~1)\}_{(0,0)}$ &1\\
$\mathbb{Z}^{\phi}_2(\kappa_{(\pi,\pi)})$ & $(\pi,\pi,\phi,\phi)$ & $\{(0~1)\}_{(\pi,\pi)}$ &1\\
$\mathbb{Z}^{\phi}_2(\zeta_{(\pi,\pi)})$ & $(\phi,\phi,\pi,\pi)$ & $\{(2~3)\}_{(\pi,\pi)}$ &1\\
$\mathbb{Z}^{\phi}_2(\zeta_{(0,0)})$ & $(\phi,\phi,0,0)$ & $\{(2~3)\}_{(0,0)}$ &1\\
$\mathbb{Z}^{\phi_i}_2(\kappa_{(0,0)})$ & $(0,0,\phi_1,\phi_2)$ & $\{(0~1)\}_{(0,0)}$ &2\\
$\mathbb{Z}^{\phi_i}_2(\kappa_{(\pi,\pi)})$ & $(\pi,\pi,\phi_1,\phi_2)$ & $\{(0~1)\}_{(\pi,\pi)}$ &2\\
$\mathbb{Z}^{\phi_i}_2(\zeta_{(\pi,\pi)})$ & $(\phi_1,\phi_2,\pi,\pi)$ & $\{(2~3)\}_{(\pi,\pi)}$ &2\\
$\mathbb{Z}^{\phi_i}_2(\zeta_{(0,0)})$ & $(\phi_1,\phi_2,0,0)$ & $\{(2~3)\}_{(0,0)}$ &2\\
\bottomrule
\end{tabular}
\end{table}

In addition, Ashwin and Swift showed in \cite{Ashwin_Swift} that for small enough values of the coupling parameter it is possible to average the equations and introduce an approximate decoupling between the fast variation of the phases and the slow variation of the phase differences. This can be seen as introducing and phase shift symmetry which acts on $\mathbb{T}^4$ by translation along the diagonal;
$$R_{\theta}(\phi_1,\ldots,\phi_4):=(\phi_1+\theta,\ldots,\phi_4+\theta),$$
for $\theta\in\mathbb{S}^1.$

Now have an ODE on that is equivariant under the action of $\mathbb{D}_2\times\mathbb{S}^1;$  now we have to classify the isotropy types of points under this action.

\begin{priteo}
The isotropy subgroups for the action of $\mathbb{D}_2\times\mathbb{S}^1$ on $\mathbb{T}^4$ together with their generators and dimension of their fixed-point subspaces are those listed in Table \eqref{table grande}.
\end{priteo}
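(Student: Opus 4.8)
The plan is to enumerate the isotropy subgroups directly by computing stabilizers of representative points in $\mathbb{T}^4$ under the full action of $\mathbb{D}_2\times\mathbb{S}^1$, and then to verify that the list in Table \ref{table grande} is exhaustive. First I would fix the action explicitly: the permutation part is generated by $\kappa=(0~1)(2~3)$ and $\zeta=(0~2)(1~3)$ acting by permuting the four phase coordinates $(\phi_1,\phi_2,\phi_3,\phi_4)\in\mathbb{T}^4$, while the circle factor $\mathbb{S}^1$ acts by the diagonal translation $R_\theta(\phi_1,\ldots,\phi_4)=(\phi_1+\theta,\ldots,\phi_4+\theta)$. A group element $((\kappa^a\zeta^b),\theta)$ fixes a point $\phi$ precisely when the permuted phases equal the original phases shifted by $\theta$ modulo $2\pi$; this turns the fixed-point condition into a system of linear congruences on the $\phi_i$ and on $\theta$, which is the computational heart of the argument.

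Next I would go through the nontrivial group elements one at a time. For each candidate subgroup $\Sigma\leq\mathbb{D}_2\times\mathbb{S}^1$ I would solve the simultaneous congruences $\sigma\cdot\phi=\phi+\theta$ for all $(\sigma,\theta)\in\Sigma$ and read off the fixed-point subspace $\mathrm{Fix}(\Sigma)$. For instance, requiring invariance under a transposition-type element forces pairs of coordinates to coincide, producing the one- and two-dimensional manifolds recorded in the table; requiring invariance under a full $\mathbb{D}_2$ together with a half-period shift $\theta=\pi$ pins the phases down to the isolated points such as $(0,0,\pi,\pi)$ and $(\pi,0,0,\pi)$, giving the zero-dimensional fixed-point spaces. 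The dimension count in the last column then follows immediately from the number of free phase parameters $\phi,\phi_1,\phi_2$ surviving the congruences, and the generators in the third column are exactly the elements whose fixed-point condition is satisfied identically on that manifold.

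To guarantee exhaustiveness I would argue the converse direction: take an arbitrary point $\phi\in\mathbb{T}^4$ and determine its isotropy subgroup $\Sigma_\phi$, showing that for every possible pattern of coincidences among the four phases the resulting stabilizer appears in the table up to conjugacy. Because $\mathbb{D}_2$ is abelian and of order four, there are only finitely many combinatorial types of elements to test, and the diagonal $\mathbb{S}^1$-action contributes at most the discrete shifts $\theta\in\{0,\pi\}$ that are compatible with the order-two permutations; this keeps the case analysis finite and bounded. I would organize the cases by the dimension of $\mathrm{Fix}(\Sigma)$, treating the generic (trivial isotropy) stratum, then the codimension-one and codimension-two strata, and finally the isolated fixed points, matching each stratum against a row of the table.

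The main obstacle I anticipate is bookkeeping rather than conceptual difficulty: because the $\mathbb{S}^1$-translation is entangled with the coordinate permutations, the same geometric submanifold can be the fixed-point set of genuinely distinct subgroups (differing in whether a reflection is paired with a $\theta=0$ or $\theta=\pi$ shift), and one must be careful not to conflate conjugate subgroups with equal subgroups or to miss the twisted isotropy types of the form $\mathbb{D}_2(\kappa,\mathrm{Id})$ where the circle component is nontrivially coupled to the permutation. Verifying that the four listed $\mathbb{D}_2$-type subgroups are pairwise nonconjugate, and that the eight $\mathbb{Z}_2^\phi$- and $\mathbb{Z}_2^{\phi_i}$-type subgroups correctly account for all one- and two-dimensional strata without duplication, is the delicate step; I would settle it by comparing the explicit generator lists and fixed-point sets in the table directly, since conjugacy in this small group is easy to check element by element.
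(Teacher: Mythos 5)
Your proposal is correct and follows essentially the same route as the paper: the paper's proof likewise verifies the fixed-point condition $\sigma\cdot\phi=\phi+\theta$ directly, writing the $\mathbb{S}^1$-shift as block rotation matrices and checking that the phases are pinned to $0$ or $\pi$ (equivalently, that $\theta\in\{0,\pi\}$ for the involutions of $\mathbb{D}_2$), except that it carries out only two representative computations --- one zero-dimensional case, $\mathbb{D}_2(\kappa,\mathrm{Id})$, and one one-dimensional case --- and asserts the remaining rows are treated similarly. Your plan is, if anything, more complete than the published proof, since the exhaustiveness step (stratifying $\mathbb{T}^4$ by coincidence patterns of the phases and matching every stabilizer to a row of the table, distinguishing twisted from untwisted $\mathbb{Z}_2$'s) is exactly what the paper leaves implicit.
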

\begin{proof}
We will explicitly calculate two examples, for the zero and one-dimensional fixed-point subspaces, respectively, the other cases being treated similarly.
$(a)$ Let's take the action of $\mathbb{D}_2(\kappa,\mathrm{Id})$ on $\mathbb{T}^4.$ We have
\begin{equation}\label{teorema tabla example}
\begin{array}{l}
\begin{bmatrix}
\cos\phi_1&-\sin\phi_1&0&0\\
\sin\phi_1&\cos\phi_1&0&0\\
0&0&\cos\phi_2&-\sin\phi_2\\
0&0&\sin\phi_2&\cos\phi_2\\
\end{bmatrix}
\begin{bmatrix}
a\\b\\c\\d
\end{bmatrix}=
\begin{bmatrix}
a\cos\phi_1-b\sin\phi_1\\
b\cos\phi_1+a\sin\phi_1\\
c\cos\phi_2-d\sin\phi_2\\
d\cos\phi_2+c\sin\phi_2\\
\end{bmatrix}=
\begin{bmatrix}
a\cos\phi\\
b\cos\phi\\
c\cos\phi\\
d\cos\phi\\
\end{bmatrix}=
\begin{bmatrix}
\pm a\\
\pm b\\
\pm c\\
\pm d\\
\end{bmatrix}.
\end{array}
\end{equation}
because $\phi_1,\phi_2=\{0,\pi\}.$ Therefore the only possible values for any arbitrary point on $\mathbb{T}^4$ to be fixed by the group are $0$ and $\pi.$ The four choices for the first four lines are deduced from the action of the elements of $\mathbb{D}_2.$\\

$(b)$ Let's take for example to action of $\mathbb{Z}^{\phi}_2(\zeta_{(\pi,0)})$ on one dimensional manifolds on $\mathbb{T}^4.$
\begin{equation}\label{teorema tabla example2}
\begin{array}{l}
\begin{bmatrix}
1&0&0&0\\
0&1&0&0\\
0&0&\cos\phi&-\sin\phi\\
0&0&\sin\phi&\cos\phi\\
\end{bmatrix}
\begin{bmatrix}
\phi\\
\phi\\
\pi\\
\pi\\
\end{bmatrix}=
\begin{bmatrix}
\phi\\
\phi\\
\pi\cos\phi\\
\pi\cos\phi\\
\end{bmatrix}=
\begin{bmatrix}
\phi\\
\phi\\
\pm\pi\\
\pm\pi
\end{bmatrix}=
\begin{bmatrix}
\phi\\
\phi\\
\pi\\
\pi\\
\end{bmatrix},
\end{array}
\end{equation}
because the only options for $\phi$ are $0$ or $\pi$ radians.

\end{proof}

\subsection{Analysis of a family of vector fields in $\mathrm{Fix}(\mathbb{Z}_2)$}\label{onetorus_theor}
We can define coordinates in $\mathrm{Fix}(\mathbb{Z}_2) $ by taking a basis
\begin{equation}\label{basis}
\begin{array}{l}
e_1=\frac{1}{2}(-1,1,-1,-1)\\
e_2=\frac{1}{2}(-1,-1,1,-1)
\end{array}
\end{equation}
and consider the space spanned by $\{e_1,e_2\}$ parameterized by $\{\phi_1,\phi_2\}:$
\begin{equation}\label{coordinates}
\sum_{n=1}^2\phi_ne_n
\end{equation}
By using these coordinates, we construct the following family of two-dimensional differential systems which satisfies the symmetry of $\mathrm{Fix}(\mathbb{Z}_2)$.
\begin{equation}\label{systema ejemplo}
\left\{
\begin{array}{l}
\dot{\phi_1}=a\sin{\phi_1}\cos{\phi_2}+\epsilon\sin{2\phi_1}\cos{2\phi_2}\\
\\
\dot{\phi_2}=-b\sin{\phi_2}\cos{\phi_1}+\epsilon\sin{2\phi_2}\cos{2\phi_1}+q(1+\cos\phi_1)\sin2\phi_2,\\
\end{array}
\right.
\end{equation}
where $a,~b>0.$
We argue that the family of vector fields \eqref{systema ejemplo} exhibits structurally stable, attracting heteroclinic cycles, which are structurally stable or completely unstable, depending on parameters $a,~b,~\epsilon$ and $q,$ as we shall prove in Theorem \ref{teorema estabilidad heteroclinas}.
In the following we will show that the planes $\phi_1=0~(\mathrm{mod}~\pi),~\phi_2=0~(\mathrm{mod}~\pi)$ are invariant under the flow of \eqref{systema ejemplo}.

Let $\mathcal{X}$ be the vector field of system \eqref{systema ejemplo}.\\
\begin{defi}
We call a trigonometric invariant algebraic curve $h(\phi_1,\phi_2)=0,$ if it is invariant by the flow of \eqref{systema ejemplo}, i.e. there exists a function $K(\phi_1,\phi_2)$ such that
\begin{equation}\label{campo}
\mathcal{X}h=\frac{\partial h}{\partial\phi_1}\dot{\phi_1}+\frac{\partial h}{\partial\phi_2}\dot{\phi_2}=Kh.
\end{equation}
\end{defi}

\begin{lema}\label{lema invariant2}
Functions $\sin\phi_1$ and $\sin\phi_2$ are trigonometric invariant algebraic curves for system \eqref{systema ejemplo}.
\end{lema}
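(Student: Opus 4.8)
The plan is to verify the Definition directly by producing, for each of the two candidate curves $h=\sin\phi_1$ and $h=\sin\phi_2$, an explicit cofactor $K(\phi_1,\phi_2)$ satisfying $\mathcal{X}h=Kh$. Since $h$ is already smooth and periodic, the entire content of the lemma reduces to showing that $\mathcal{X}h$ is divisible by $h$ as a trigonometric expression, i.e. that $\mathcal{X}(\sin\phi_i)$ has $\sin\phi_i$ as a common factor of all its terms. The cofactor $K$ is then simply the quotient.

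First I would treat $h=\sin\phi_1$. Because $h$ is independent of $\phi_2$, the Lie derivative in \eqref{campo} collapses to $\mathcal{X}h=\cos\phi_1\,\dot\phi_1$. The crucial observation is that every term of $\dot\phi_1$ in \eqref{systema ejemplo} carries a factor of $\sin\phi_1$: the first term $a\sin\phi_1\cos\phi_2$ explicitly, and the second because the double-angle identity gives $\sin2\phi_1=2\sin\phi_1\cos\phi_1$. Hence $\cos\phi_1\,\dot\phi_1=\sin\phi_1\cdot\bigl[\cos\phi_1\left(a\cos\phi_2+2\epsilon\cos\phi_1\cos2\phi_2\right)\bigr]$, which is exactly of the form $K_1\sin\phi_1$ with $K_1$ the smooth bracketed function. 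This establishes that $\sin\phi_1=0$ is invariant.

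The curve $h=\sin\phi_2$ is handled identically: now $\mathcal{X}h=\cos\phi_2\,\dot\phi_2$, and one checks that all three terms of $\dot\phi_2$ contain $\sin\phi_2$ — the first term directly, and the remaining two via $\sin2\phi_2=2\sin\phi_2\cos\phi_2$ — so that $\sin\phi_2$ again factors out and yields a smooth cofactor $K_2$. I do not expect any genuine obstacle here; the argument is a direct verification, and the only point requiring care is the consistent use of the double-angle identities to expose the common factor, together with tracking the extra coupling term $q(1+\cos\phi_1)\sin2\phi_2$, which is divisible by $\sin\phi_2$ in precisely the same way.
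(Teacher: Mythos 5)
Your proposal is correct and follows essentially the same route as the paper: both factor $\sin\phi_i$ out of $\dot\phi_i$ via the double-angle identity $\sin 2\phi_i=2\sin\phi_i\cos\phi_i$ and read off the cofactor $K_i$ directly from the Lie derivative, yielding the same $K_1=\cos\phi_1\left(a\cos\phi_2+2\epsilon\cos\phi_1\cos 2\phi_2\right)$ as in the paper (and the analogous $K_2$, including the coupling term $2q(1+\cos\phi_1)\cos\phi_2$). The only cosmetic difference is that the paper first rewrites the whole system in factored form \eqref{systema ejemplo1} and then computes, whereas you factor within the Lie derivative itself.
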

\begin{proof}
We can write the system \eqref{systema ejemplo} in the form
\begin{equation}\label{systema ejemplo1}
\left\{
\begin{array}{l}
\dot{\phi_1}=\sin{\phi_1}\left(a\cos{\phi_2}+2\epsilon\cos{\phi_1}\cos{2\phi_2}\right)\\
\\
\dot{\phi_2}=\sin{\phi_2}\left(-b\cos{\phi_1}+2\epsilon\cos{\phi_2}\cos{2\phi_1}+2q(1+\cos\phi_1)\cos\phi_2\right)\\
\end{array}
\right.
\end{equation}

Now if we choose $h_1=\sin\phi_1,$ then $Xh_1=\cos{\phi_1}\sin{\phi_1}\left(a\cos{\phi_2}+2\epsilon\cos{\phi_1}\cos{2\phi_2}\right)$
so $K_1=\cos{\phi_1}\left(a\cos{\phi_2}+2\epsilon\cos{\phi_1}\cos{2\phi_2}\right).$ The second case follows similarly.
\end{proof}\\

Since the planes $\phi_i=0(\mathrm{mod}~\pi),~i=1,2$ are invariant under the flow of \eqref{systema ejemplo}, it is clear that $(0,0),~(\pi,0),~(\pi,\pi),~(0,\pi)$ are equilibria for \eqref{systema ejemplo}. To check the possibility of heteroclinic cycles in system \eqref{systema ejemplo}, we linearize first about the equilibria (i.e. the zero-dimensional fixed points) and then about the one-dimensional manifolds connecting these equilibria. We can assume without loss of genericity that $\mathrm{Fix}\left(\mathbb{Z}_2\right)$ is attracting for the dynamics and therefore the stabilities. The idea is that the analysis of the dynamics within the fixed point space $\mathrm{Fix}\left(\mathbb{Z}_2\right)$ is crucial in determining the stabilities of the full system. In particular we will prove that eigenvalues of the linearization in each cases are of opposite signs, allowing the existence of such a heteroclinic network between the equilibria.

In the following we will show that there exists the possibility of a heteroclinic cycle in any of the two-dimensional fixed-point spaces in Table \eqref{table grande} and the connections between these zero dimensional fixed point spaces are possible within specific routes within the one-dimensional fixed points of \eqref{systema ejemplo}. For the proof we will need
\begin{lema}\label{lema saddles}
Assume $|\epsilon|<\mathrm{min}\{\frac{a}{2},\frac{b}{2}\}$ and $|\epsilon+2q|<\frac{b}{2}.$ Then the four fixed points in Table \eqref{table fixed points} are saddles.
\end{lema}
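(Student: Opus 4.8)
The plan is to linearize the vector field $\mathcal{X}$ of \eqref{systema ejemplo} at each of the four equilibria $(0,0),(\pi,0),(\pi,\pi),(0,\pi)$ and to verify that at every one of them the two eigenvalues of the Jacobian are real and of opposite sign, which is exactly the defining property of a saddle. Writing the two components of $\mathcal{X}$ as $F_1$ and $F_2$, the Jacobian $d\mathcal{X}$ has diagonal entries
\[
\partial_{\phi_1}F_1=a\cos\phi_1\cos\phi_2+2\epsilon\cos2\phi_1\cos2\phi_2,\qquad
\partial_{\phi_2}F_2=-b\cos\phi_1\cos\phi_2+2\epsilon\cos2\phi_1\cos2\phi_2+2q(1+\cos\phi_1)\cos2\phi_2,
\]
while its two off-diagonal entries $\partial_{\phi_2}F_1=-a\sin\phi_1\sin\phi_2-2\epsilon\sin2\phi_1\sin2\phi_2$ and $\partial_{\phi_1}F_2=b\sin\phi_1\sin\phi_2-2\epsilon\sin2\phi_1\sin2\phi_2-q\sin\phi_1\sin2\phi_2$ consist solely of terms carrying a sine factor.

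The decisive structural observation is that each off-diagonal term contains a factor $\sin\phi_1$, $\sin\phi_2$, $\sin2\phi_1$ or $\sin2\phi_2$, all of which vanish whenever $\phi_1,\phi_2\in\{0,\pi\}$. Since every one of the four equilibria has coordinates in $\{0,\pi\}$, the matrix $d\mathcal{X}$ is diagonal at each of them, so its eigenvalues are read off directly as the two diagonal entries. This collapses the saddle test to a comparison of the signs of these entries, requiring nothing beyond substitution of $\cos0=1$, $\cos\pi=-1$, $\cos2\phi=1$ for $\phi\in\{0,\pi\}$, and $(1+\cos\phi_1)\in\{2,0\}$.

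Carrying out the substitution, I expect the eigenvalue pairs $(\partial_{\phi_1}F_1,\partial_{\phi_2}F_2)$ to be $(a+2\epsilon,\,-b+2\epsilon+4q)$ at $(0,0)$; $(-a+2\epsilon,\,b+2\epsilon)$ at $(\pi,0)$; $(a+2\epsilon,\,-b+2\epsilon)$ at $(\pi,\pi)$; and $(-a+2\epsilon,\,b+2\epsilon+4q)$ at $(0,\pi)$. The hypothesis $|\epsilon|<\min\{a/2,b/2\}$ forces $a+2\epsilon>0$, $-a+2\epsilon<0$, $b+2\epsilon>0$ and $-b+2\epsilon<0$, while $|\epsilon+2q|<b/2$ forces $-b+2\epsilon+4q<0$ and $b+2\epsilon+4q>0$. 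In each pair the two entries then have strictly opposite signs, so every equilibrium is a hyperbolic saddle, which is the claim.

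There is no genuinely hard analytic step here; the difficulty is entirely in the bookkeeping of signs. The one point worth flagging is the distinct roles of the two hypotheses. The factor $(1+\cos\phi_1)$ equals $2$ at $(0,0)$ and $(0,\pi)$, keeping the $q$-term alive, so precisely these two points need $|\epsilon+2q|<b/2$; at $(\pi,0)$ and $(\pi,\pi)$ the same factor vanishes and only $|\epsilon|<\min\{a/2,b/2\}$ is invoked. Keeping these two cases apart, together with the careful handling of the sign of $\cos$ at $\pi$ in the $\cos\phi_1\cos\phi_2$ products, is the only place where an error could plausibly enter the argument.
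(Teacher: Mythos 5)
Your proof is correct and follows the same route as the paper: the paper's proof simply tabulates the eigenvalues at the four equilibria (its Table of eigenvalues matches your pairs exactly, with $-b+2(\epsilon+2q)$ and $b+2(\epsilon+2q)$ written for your $-b+2\epsilon+4q$ and $b+2\epsilon+4q$) and declares the opposite signs ``by inspection.'' You merely make explicit what the paper leaves implicit --- the vanishing of the off-diagonal Jacobian entries at points with coordinates in $\{0,\pi\}$ and the sign bookkeeping under the two hypotheses --- which is a faithful, slightly more complete rendering of the same argument.
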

\begin{proof}
By inspection it is clear that the eigenvalues of these fixed points are of opposite signs in each case.

\begin{table}
\centering
\begin{center}
\caption{Eigenvalues of the flow of equation \eqref{systema ejemplo}, at the four non-conjugate zero-dimensional fixed points.}\label{table fixed points}
\end{center}
\begin{tabular}{cccc}
\toprule
$\mathrm{Fix}(\Sigma)$ & $(\phi_1,\phi_2)$ & $\lambda_1$ & $\lambda_2$\\
\midrule
\centering
$\mathbb{D}_2(0)$& $(0,0)$ & $a+2\epsilon$ &$-b+2(\epsilon+2q)$\\
$\mathbb{D}_2(\kappa,\mathrm{Id})$ & $(0,\pi)$ & $-a+2\epsilon$ &$b+2(\epsilon+2q)$\\
$\mathbb{D}_2(\kappa\zeta,\kappa)$ & $(\pi,0)$ & $-a+2\epsilon$ &$b+2\epsilon$\\
$\mathbb{D}_2(\pi)$ & $(\pi,\pi)$ & $a+2\epsilon$ &$-b+2\epsilon$\\
\bottomrule
\end{tabular}
\end{table}

\end{proof}
\begin{priteo}
There exists the possibility of a heteroclinic cycle in any of the two-dimensional fixed point subspaces in Table \eqref{table grande} in the following way:
\begin{equation}\label{flechas}
\begin{array}{l}
\cdots\xrightarrow{\mathbb{Z}^{\phi}_2(\zeta_{(0,0)})}\mathbb{D}_2(0)\xrightarrow{\mathbb{Z}^{\phi}_2(\kappa_{(0,0)})}
\mathbb{D}_2(\kappa,\mathrm{Id})\xrightarrow{\mathbb{Z}^{\phi}_2(\zeta_{(\pi,\pi)})}\\
\\
\hspace{3cm}\mathbb{D}_2(\pi)
\xrightarrow{\mathbb{Z}^{\phi}_2(\kappa_{(\pi,\pi)})}\mathbb{D}_2(\kappa\zeta,\kappa)\xrightarrow{\mathbb{Z}^{\phi}_2(\zeta_{(0,0)})}\cdots
\end{array}
\end{equation}
where the connection between the four equilibria in the plane $\phi_1,\phi_2$ is carried out along the indicated one-dimensional manifolds.
\end{priteo}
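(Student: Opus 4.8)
The plan is to work entirely inside the two-dimensional reduced system \eqref{systema ejemplo} on the square $[0,\pi]\times[0,\pi]$, whose four corners $(0,0),(0,\pi),(\pi,0),(\pi,\pi)$ are the equilibria of Table \eqref{table fixed points}, and to realize the cycle as the boundary of this square traversed edge by edge. The four edges $\phi_1\in\{0,\pi\}$ and $\phi_2\in\{0,\pi\}$ are flow-invariant: this is forced by equivariance, since they are the one-dimensional fixed-point subspaces of Table \eqref{table grande}, and it is also exactly the content of Lemma \ref{lema invariant2}, because $\sin\phi_1$ and $\sin\phi_2$ are trigonometric invariant algebraic curves. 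Hence each edge is a one-dimensional invariant set joining two adjacent corners, and a heteroclinic connection along it is simply an orbit of the scalar flow obtained by restricting the factored form \eqref{systema ejemplo1} to that edge.

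First I would carry out these restrictions. On $\phi_1=0$ equation \eqref{systema ejemplo1} collapses to $\dot\phi_2=\sin\phi_2\bigl(-b+2(\epsilon+2q)\cos\phi_2\bigr)$, on $\phi_1=\pi$ to $\dot\phi_2=\sin\phi_2\bigl(b+2\epsilon\cos\phi_2\bigr)$, and symmetrically $\dot\phi_1=\sin\phi_1\bigl(a+2\epsilon\cos\phi_1\bigr)$ on $\phi_2=0$ and $\dot\phi_1=\sin\phi_1\bigl(-a+2\epsilon\cos\phi_1\bigr)$ on $\phi_2=\pi$. The crucial step, and the one I expect to be the real obstacle, is to rule out any zero of these scalar fields in the open edge, since the only way a candidate route can fail is an intervening equilibrium. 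An interior zero would require, respectively, $\cos\phi_2=b/\bigl(2(\epsilon+2q)\bigr)$, $\cos\phi_2=-b/(2\epsilon)$, $\cos\phi_1=-a/(2\epsilon)$ or $\cos\phi_1=a/(2\epsilon)$, and each of these right-hand sides has modulus strictly greater than $1$ precisely under the hypotheses $|\epsilon|<\min\{a/2,b/2\}$ and $|\epsilon+2q|<b/2$ of Lemma \ref{lema saddles}. Thus every restricted field keeps a constant sign on its open edge, the flow there is monotone, and the edge is a single heteroclinic orbit running from one corner to the adjacent one.

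Next I would fix the orientation of each connection from the eigenvalue signs in Table \eqref{table fixed points}, which under the same hypotheses make every corner a saddle with one positive and one negative eigenvalue. The key combinatorial observation is that at each saddle the stable eigendirection lies along one incident edge and the unstable eigendirection along the other; reading Table \eqref{table fixed points} one verifies that the unstable manifold of each saddle, confined to an edge, terminates exactly on the stable manifold of the adjacent saddle along that same edge. Concatenating the four monotone edge-orbits with these matching stable/unstable directions then yields a closed sequence of connections around $\partial([0,\pi]^2)$, which is the heteroclinic cycle displayed in \eqref{flechas}.

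Finally, I would note that the two-dimensional reduction is legitimate because $\mathrm{Fix}(\mathbb{Z}_2)$ is flow-invariant, and that by the genericity and attractivity assumption made before the statement the transverse directions play no role in the \emph{existence} of the connections inside $\mathrm{Fix}(\mathbb{Z}_2)$; those transverse eigenvalues are instead what will feed into the stability analysis of Theorem \ref{teorema estabilidad heteroclinas}. In summary, the genuine work is the interior-zero exclusion of the second paragraph, where the parameter inequalities of Lemma \ref{lema saddles} enter decisively; everything else is a direct read-off from Lemmas \ref{lema invariant2} and \ref{lema saddles} together with Table \eqref{table fixed points}.
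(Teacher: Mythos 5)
Your argument is correct and rests on the same skeleton as the paper's proof---Lemma \ref{lema invariant2} for the invariance of the edges, Lemma \ref{lema saddles} for the saddle corners, and the connections realized along the boundary of $[0,\pi]^2$---but your key verification is genuinely different and, in fact, sounder. The paper linearizes at every point of the one-dimensional manifolds and reads off the two diagonal entries of the Jacobian (Table \ref{table fixed points repetition}), asserting that these are ``clearly of opposite signs'' along each path; as stated this is neither sufficient nor uniformly true: eigenvalues of the Jacobian at non-equilibrium points do not by themselves certify a saddle-to-saddle connection, and the tangential entry, e.g.\ $a\cos\phi_1+2\epsilon\cos 2\phi_1$ on the edge $(\phi_1,0)$, equals $-2\epsilon$ at $\phi_1=\pi/2$ and so can change sign mid-edge. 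What \emph{is} uniformly sign-definite under the hypotheses of Lemma \ref{lema saddles} is the velocity along each open edge---your factors $a+2\epsilon\cos\phi_1$, $b+2\epsilon\cos\phi_2$, $-b+2(\epsilon+2q)\cos\phi_2$ and $-a+2\epsilon\cos\phi_1$---and your interior-zero exclusion is exactly the statement needed to conclude that each open edge is a single monotone orbit connecting adjacent corners; this replaces the paper's pointwise eigenvalue claim by the correct dynamical argument, at the modest price of computing the four scalar restrictions explicitly. One caveat on orientation: your restricted fields force the traversal $(0,0)\to(\pi,0)\to(\pi,\pi)\to(0,\pi)\to(0,0)$ for \emph{all} admissible parameters, i.e.\ $\mathbb{D}_2(0)\to\mathbb{D}_2(\kappa\zeta,\kappa)\to\mathbb{D}_2(\pi)\to\mathbb{D}_2(\kappa,\mathrm{Id})\to\mathbb{D}_2(0)$, which is the loop of \eqref{flechas} traversed in the opposite sense (the paper's edge labels and arrow targets in \eqref{flechas} are not mutually consistent with Table \ref{table fixed points repetition}), so you should either state the induced orientation explicitly or remark that the heteroclinic cycle exists as a closed loop irrespective of the printed arrow directions; this does not affect the existence claim you were asked to prove.
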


\begin{proof}
From Lemma \eqref{lema saddles} we know that the four fixed points are saddles, so there is possible a heteroclinic connection between them. We linearize the system \eqref{systema ejemplo} at every point in the one-dimensional manifolds of the fixed point spaces in Table \eqref{table grande}.
The Jacobian matrix of the system evaluated at these points has eigenvalues shown in Table \eqref{table fixed points repetition}.
This way we obtain the four paths shown in Table \eqref{table fixed points}. Moreover, using the conditions for $\epsilon$ and $q$ in Lemma \eqref{lema saddles} these eigenvalues are clearly of opposite signs in each case. A schematic view of the heteroclinic cycle is offered in Figure \eqref{flechas repetition}.
\begin{table}
\centering
\begin{center}
\caption{Eigenvalues of the flow of equation \eqref{systema ejemplo}, at the four non-conjugate paths on the one-dimensional fixed-point subspaces on $\mathbb{T}^4$.}\label{table fixed points repetition}
\end{center}
\begin{tabular}{cccc}
\toprule
$\mathrm{Fix}(\Sigma)$ & $(\phi_1,\phi_2)$ & $\lambda_1$ & $\lambda_2$\\
\midrule
\centering
$\mathbb{Z}^{\phi_1}_2(\kappa_{(0,0)})$ &$(\phi_1,0)$& $a\cos\phi_1+2\epsilon\cos2\phi_1$ &$-b\cos\phi_1+2q(1+\cos\phi_1)+2\epsilon\cos2\phi_1$\\
$\mathbb{Z}^{\phi}_2(\zeta_{(\pi,\pi)})$& $(\pi,\phi_2)$& $-a\cos\phi_2+2\epsilon\cos2\phi_2$ &$b\cos\phi_2+2\epsilon\cos2\phi_2$\\
$\mathbb{Z}^{\phi}_2(\kappa_{(\pi,\pi)})$ & $(\phi_1,\pi)$ &$-a\cos\phi_1+2\epsilon\cos2\phi_1$&$b\cos\phi_1+2q(1+\cos\phi_1)+2\epsilon\cos2\phi_1$\\
$\mathbb{Z}^{\phi}_2(\zeta_{(0,0)})$& $(0,\phi_2)$ & $a\cos\phi_2+2\epsilon\cos2\phi_2$ &$-b\cos\phi_2+2\epsilon\cos2\phi_2+4q\cos2\phi_2$\\
\bottomrule
\end{tabular}
\end{table}

\end{proof}

\begin{figure}[h]
\centering
\begin{center}
\includegraphics[width=5.83cm,height=7.0cm]{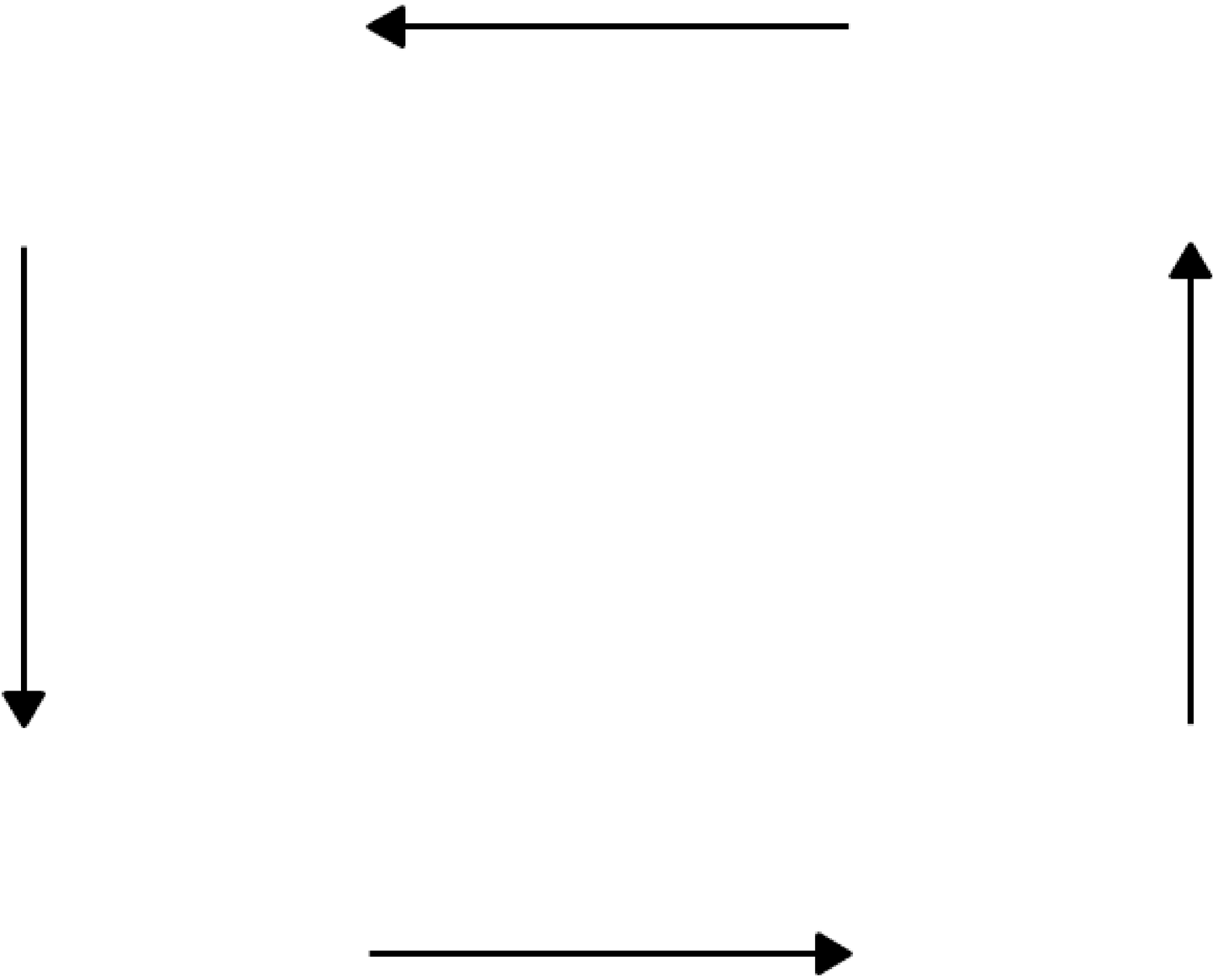}
\caption{Schematic representation of a structurally stable heteroclinic cycle for the weak coupled oscillators within the two-dimensional space of the $\mathrm{Fix}(\mathbb{Z}_2)$ group.}\label{flechas repetition}
\begin{picture}(0,0)
\put(-90,81){\Large{$\mathbb{D}_2(0)$}}
\put(50,81){\Large{$\mathbb{D}_2(\kappa,\mathrm{Id})$}}
\put(-30,60){\Large{$\mathbb{Z}^{\phi}_2(\kappa_{(0,0)})$}}
\put(87,169){\mbox{\rotatebox{-90}{{\Large{$\mathbb{Z}^{\phi}_2(\zeta_{(\pi,\pi)})$}}}}}
\put(58,197){\Large{$\mathbb{D}_2(\pi)$}}
\put(-25,213){\Large{$\mathbb{Z}^{\phi}_2(\kappa_{(\pi,\pi)})$}}
\put(-110,197){\Large{$\mathbb{D}_2(\kappa\zeta,\kappa)$}}
\put(-105,117){\mbox{\rotatebox{90}{{\Large{$\mathbb{Z}^{\phi}_2(\zeta_{(0,0)})$}}}}}
\end{picture}
\end{center}
\end{figure}

Under the conditions in Lemma \eqref{lema saddles}, the stability of the heteroclinic cycle is guaranteed by the fact that it is restricted to the dynamics of system \eqref{systema ejemplo} within the fixed point spaces. We use the criteria of Krupa and Melbourne \cite{Krupa} to study the stability of the heteroclinic cycle.
We now recall these stability criteria, which are based on four hypotheses.\\
The first hypothesis guarantees that the heteroclinic cycle is robust.
\begin{itemize}
\item ($S_1$) There is an isotropy subgroup $\Sigma_j$ with the fixed-point subspace $P_j=\mathrm{Fix}(\Sigma_j)$ such that $W^u(\xi_j)\cap P_j\subset W^s(\xi_{j+1})$ and $\xi_{j+1}$ is a sink in $P_j.$\\
Corresponding to each isotropy subgroup $\Sigma_j$ in ($S_1$) is the isotypic decomposition $\mathbb{R}^n=W_0\oplus\ldots\oplus W_q$ of $\mathbb{R}^n$ into isotypic components. Let $W_0=P_j$ and $N(\Sigma_j)$ denote the normalizer of $\Sigma_j$ in $\Gamma.$\\
\item ($S_2$) The eigenspaces corresponding to $c_j,~t_j,~e_{j+1}$ and $t_{j+1}$ lie in the same $\Sigma_j-$isotypic component;\\
\item ($S_3$) $\mathrm{dim}W^u(\xi_j)\cap P_j=\mathrm{dim}(N(\Sigma_j)/\Sigma_j)+1;$\\
\item ($S_4$) All transverse eigenvalues of $\xi_j$ with positive real part lie in the same $\Sigma_j-$isotypic component.
\end{itemize}

Set $\rho_j=\mathrm{min}(c_j/e_j,1-t_j/e_j)$ and define $\rho=\rho_1\cdot\cdot\cdot\rho_m.$

\begin{priteo}
[Krupa, Melbourne] Let $\Gamma$ be a finite group acting on $\mathbb{R}^n$ and $f:\mathbb{R}^n\rightarrow\mathbb{R}^n$ be a $\Gamma-$equivariant vector field. Suppose that $X$ is a heteroclinic cycle for $f$ satisfying hypotheses $(S_1)-(S_4).$ Then generically the stability of $X$ is described by precisely one of the following possibilities.
\begin{itemize}
\item [(a)] asymptotically stable ($\rho>1$ and $t_j<0$ for each $j$),
\item [(b)] unstable but essentially asymptotically stable ($\rho>1$ and $t_j<e_j$ for each $j$ and $t_j>0$ for some $j$),
\item [(c)] almost completely unstable ($\rho<1$ or $t_j>e_j$ for some $j$),
\item [(d)] completely unstable if $\rho<1.$
\end{itemize}
\end{priteo}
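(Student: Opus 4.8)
Since this is precisely the stability theorem of Krupa and Melbourne, it may simply be quoted from \cite{Krupa}; for completeness I sketch the Poincar\'e return-map argument that underlies it. The plan is to reduce the stability of the cycle $X$ to the analysis of a single return map to a cross-section, assembled by composing local passage maps near the equilibria with global transition maps along the connections, and then to read off the four cases from the leading-order behaviour of that map. First I would fix, near each equilibrium $\xi_j$ of the cycle, coordinates adapted to the spectrum of $df(\xi_j)$, separating the contracting directions (eigenvalues $-c_j$, $c_j>0$), the expanding direction along the outgoing connection (eigenvalue $e_j>0$), and the transverse directions (eigenvalues $t_j$, those not tangent to the relevant fixed-point subspace $P_j$). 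Hypothesis $(S_1)$ makes the connection $W^u(\xi_j)\cap P_j\subset W^s(\xi_{j+1})$ robust, so that incoming and outgoing cross-sections $H_j^{\mathrm{in}},\,H_j^{\mathrm{out}}$ transverse to the flow can be fixed once and for all.

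Next I would estimate the local passage map $H_j^{\mathrm{in}}\to H_j^{\mathrm{out}}$. Using a $C^1$-linearization of the flow near the hyperbolic saddle $\xi_j$, a point entering with small contracting and transverse coordinates leaves with those coordinates rescaled by power laws in the expanding coordinate, the exponents being the ratios $c_j/e_j$ and $t_j/e_j$. Hypotheses $(S_2)$ and $(S_4)$ are exactly what allow me to treat the contracting, expanding and transverse directions as lying in a single $\Sigma_j$-isotypic block, so that the linearized passage map is block-diagonal and these exponents are unambiguous. Composing each passage map with the global transition diffeomorphism $H_j^{\mathrm{out}}\to H_{j+1}^{\mathrm{in}}$ induced by the flow along the connection---which by $(S_2)$ preserves the isotypic splitting---and going once around the cycle produces a return map $P$ to $H_1^{\mathrm{in}}$. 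Hypothesis $(S_3)$, forcing $\dim(W^u(\xi_j)\cap P_j)=\dim(N(\Sigma_j)/\Sigma_j)+1$, ensures the in-cycle unstable direction is essentially one-dimensional, so the leading dynamics of $P$ reduce to a scalar radial map together with transverse factors.

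I would then read off the two governing quantities. The contraction of the radial coordinate under one full return is governed by the product $\rho=\rho_1\cdots\rho_m$ with $\rho_j=\min(c_j/e_j,\,1-t_j/e_j)$, while growth or decay transverse to $P_j$ is governed by the signs of the $t_j$ relative to $0$ and to $e_j$. If $\rho>1$ the radial map contracts toward the cycle; then $t_j<0$ for all $j$ gives genuine asymptotic stability (a), whereas some $t_j>0$ with all $t_j<e_j$ leaves a basin of full measure density but allows transverse leakage, i.e.\ essential asymptotic stability (b). If instead some $t_j>e_j$---so that $1-t_j/e_j<0$ and hence $\rho_j<0$---or if $\rho<1$, the return map expands in the corresponding direction and the cycle is almost completely unstable (c) or completely unstable (d). The qualifier ``generically'' discards the codimension-one boundaries $\rho=1$, $t_j=0$ and $t_j=e_j$, on which the classification is inconclusive.

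I expect the main obstacle to be the careful derivation of the local passage-map estimates and their uniformity under composition: one must control the nonlinear distortion near each saddle and show that the power-law exponents survive being chained around the cycle. In particular, the minimum defining $\rho_j$ appears precisely because the slower of the two return mechanisms---radial contraction at rate $c_j/e_j$ versus transverse relaxation at rate $1-t_j/e_j$---dominates, and establishing that this minimum, rather than some finer interaction between the blocks, controls the asymptotics is the delicate analytic point.
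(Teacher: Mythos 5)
The paper offers no proof of this theorem at all: it is quoted verbatim as an external result of Krupa and Melbourne \cite{Krupa}, which is exactly what you propose doing, so your approach matches the paper's. Your supplementary sketch of the underlying Poincar\'e return-map argument (local passage maps with exponents $c_j/e_j$ and $t_j/e_j$, global transitions respecting the isotypic splitting, and the product $\rho=\rho_1\cdots\rho_m$ governing the radial contraction) is a faithful outline of the cited proof and goes beyond what the paper itself provides.
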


Applying these criteria to our case, we have

\begin{priteo}\label{teorema estabilidad heteroclinas}
Heteroclinic cycle \eqref{flechas} is:
\begin{itemize}
\item [(a)] asymptotically stable if
\begin{equation}\label{eq1 teorema estabilidad}
\begin{array}{l}
-\mathrm{min}\{\frac{a}{2},\frac{b}{2}\}<\epsilon<0~~\mathrm{and}~~-\frac{b}{2}<\epsilon+2q<0,~~\mathrm{or}\\
\\
-\mathrm{min}\{\frac{a}{2},\frac{b}{2}\}<\epsilon<0~~\mathrm{and}~~0<\epsilon+2q<\frac{b}{2}~~\mathrm{and}\\
\\
\hspace{5cm}\displaystyle{\frac{b-2(\epsilon+2q)}{b+2(\epsilon+2q)}>\frac{(a+2\epsilon )^2(b+2\epsilon)}{(a-2\epsilon )^2(b-2\epsilon)}},~~\mathrm{or}\\
\\
0<\epsilon<\mathrm{min}\{\frac{a}{2},\frac{b}{2}\}~~\mathrm{and}~~-\frac{b}{2}<\epsilon+2q<0~~\mathrm{and}\\
\\
\hspace{5cm}\displaystyle{\frac{b-2(\epsilon+2q)}{b+2(\epsilon+2q)}<\frac{(a+2\epsilon )^2(b+2\epsilon)}{(a-2\epsilon )^2(b-2\epsilon)}};
\end{array}
\end{equation}
\item [(b)] completely unstable if
\begin{equation}\label{eq1 teorema estabilidad repetition}
\begin{array}{l}
0<\epsilon<\mathrm{min}\{\frac{a}{2},\frac{b}{2}\}~~\mathrm{and}~~0<\epsilon+2q<\frac{b}{2},~~\mathrm{or}\\
\\
-\mathrm{min}\{\frac{a}{2},\frac{b}{2}\}<\epsilon<0~~\mathrm{and}~~0<\epsilon+2q<\frac{b}{2}~~\mathrm{and}\\
\\
\hspace{5cm}\displaystyle{\frac{b-2(\epsilon+2q)}{b+2(\epsilon+2q)}<\frac{(a+2\epsilon )^2(b+2\epsilon)}{(a-2\epsilon )^2(b-2\epsilon)}},~~\mathrm{or}\\
\\
0<\epsilon<\mathrm{min}\{\frac{a}{2},\frac{b}{2}\}~~\mathrm{and}~~-\frac{b}{2}<\epsilon+2q<0~~\mathrm{and}\\
\\
\hspace{5cm}\displaystyle{\frac{b-2(\epsilon+2q)}{b+2(\epsilon+2q)}>\frac{(a+2\epsilon )^2(b+2\epsilon)}{(a-2\epsilon )^2(b-2\epsilon)}}.
\end{array}
\end{equation}
\end{itemize}
\end{priteo}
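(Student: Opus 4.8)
The plan is to deduce everything from the Krupa--Melbourne theorem quoted just above, so that the proof splits into three tasks: (i) read off, at each of the four saddles of Lemma \eqref{lema saddles}, the expanding, contracting and transverse eigenvalues; (ii) check that the cycle \eqref{flechas} satisfies the hypotheses $(S_1)$--$(S_4)$; and (iii) evaluate $\rho=\rho_1\cdots\rho_4$ and translate the dichotomy $\rho\gtrless 1$ into the displayed inequalities. Because the whole cycle lies in the two-dimensional plane $\mathrm{Fix}(\mathbb{Z}_2)$ and each connection lies in one of the invariant coordinate circles of Lemma \eqref{lema invariant2}, the eigenvalue data is exactly what Tables \eqref{table fixed points} and \eqref{table fixed points repetition} record.

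First I would fix the orientation forced by the flow: at each node the trajectory leaves along the unstable direction, so at $\mathbb{D}_2(0)$ it departs along $\phi_1$, and tracing the unstable directions around gives the four connections, each carrying one tangent (expanding at the source, contracting at the target) and one transverse eigenvalue. Reading these off and interpolating along the paths of Table \eqref{table fixed points repetition} yields, with the labelling of the four nodes in cyclic order,
$$e_1=a+2\epsilon,\quad e_2=b+2\epsilon,\quad e_3=a+2\epsilon,\quad e_4=b+2(\epsilon+2q),$$
$$c_1=b-2(\epsilon+2q),\quad c_2=a-2\epsilon,\quad c_3=b-2\epsilon,\quad c_4=a-2\epsilon,$$
all strictly positive under the hypotheses $|\epsilon|<\min\{\frac{a}{2},\frac{b}{2}\}$ and $|\epsilon+2q|<\frac{b}{2}$ of Lemma \eqref{lema saddles}. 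The remaining eigenvalues $t_j$ sit in the two directions of $\mathbb{T}^4$ normal to $\mathrm{Fix}(\mathbb{Z}_2)$; by the standing genericity assumption that $\mathrm{Fix}(\mathbb{Z}_2)$ is attracting, $t_j<0$ for every $j$.

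Next I would verify $(S_1)$--$(S_4)$. Robustness $(S_1)$ holds because each connection lies in a flow-invariant one-dimensional fixed-point circle (Lemma \eqref{lema invariant2}) on which, by the eigenvalue signs, the source is a repeller and the target a sink. The dimension count $(S_3)$ reduces to $\dim\bigl(W^u(\xi_j)\cap P_j\bigr)=1=\dim(N(\Sigma_j)/\Sigma_j)+1$, since the relevant normalizer quotient is finite. The isotypic hypotheses $(S_2)$ and $(S_4)$ are then checked node by node against the $\mathbb{D}_2\times\mathbb{S}^1$-isotypic decomposition of Table \eqref{table grande}: the contracting, tangent-expanding and transverse directions fall into distinct one-dimensional isotypic components, so each hypothesis is a short verification. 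Since $t_j<0$ forces $1-t_j/e_j>1$, the minimum defining $\rho_j$ is attained at $c_j/e_j$, whence
$$\rho=\prod_{j=1}^{4}\frac{c_j}{e_j}=\frac{(a-2\epsilon)^2(b-2\epsilon)\bigl(b-2(\epsilon+2q)\bigr)}{(a+2\epsilon)^2(b+2\epsilon)\bigl(b+2(\epsilon+2q)\bigr)}.$$

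Finally, as all factors are positive, $\rho>1$ is equivalent to $\dfrac{b-2(\epsilon+2q)}{b+2(\epsilon+2q)}>\dfrac{(a+2\epsilon)^2(b+2\epsilon)}{(a-2\epsilon)^2(b-2\epsilon)}$, and the case split is produced by noting that $(a-2\epsilon)^2(b-2\epsilon)/\bigl[(a+2\epsilon)^2(b+2\epsilon)\bigr]$ exceeds $1$ exactly when $\epsilon<0$, while $\bigl(b-2(\epsilon+2q)\bigr)/\bigl(b+2(\epsilon+2q)\bigr)$ exceeds $1$ exactly when $\epsilon+2q<0$; the four sign combinations give the unconditional regimes and the two mixed regimes requiring the ratio test. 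Part (a) then follows from clause (a) of the Krupa--Melbourne theorem (here $\rho>1$ together with $t_j<0$ for all $j$), and part (b) from clause (d) (here $\rho<1$). I expect the genuine work to be $(S_2)$ and $(S_4)$: one must make the identification of the $\mathbb{D}_2\times\mathbb{S}^1$-isotypic components at each saddle fully explicit, since it is precisely this symmetry that aligns the transverse eigendirections and guarantees that the connections persist; once that is settled, the eigenvalue bookkeeping and the final sign analysis are routine.
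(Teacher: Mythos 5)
Your proposal follows essentially the same route as the paper's own proof: read the contracting and expanding eigenvalues off Tables \eqref{table fixed points} and \eqref{table fixed points repetition}, set $\rho_j=c_j/e_j$, form the product, and invoke Theorem 2.4 of Krupa--Melbourne. Your eigenvalue bookkeeping is correct --- your $(\rho_1,\rho_2,\rho_3,\rho_4)$ are the paper's $(\rho_1,\rho_3,\rho_4,\rho_2)$ from \eqref{rho values1}, a harmless relabelling, and in fact your ordering is the true flow order: tracing unstable directions gives $(0,0)\to(\pi,0)\to(\pi,\pi)\to(0,\pi)\to(0,0)$, so the arrows in \eqref{flechas} as printed run against the eigenvalue data, which you silently correct. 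Your product agrees with \eqref{rho values2}, and your verifications of $(S_1)$--$(S_4)$, of the positivity of all $c_j,e_j$, and of the existence and orientation of the connections along the invariant circles are all glossed over in the paper, so on those points your write-up is more complete than the original. One small gloss you share with the paper: the claim that the minimum defining $\rho_j$ is attained at $c_j/e_j$ does not follow merely from $1-t_j/e_j>1$ (if $c_j/e_j>1-t_j/e_j$ the minimum is the latter, and with unspecified transverse eigenvalues the dichotomy $\rho\gtrless1$ could in principle shift); the implicit reading, in both your argument and the paper's, is that stability is assessed for the flow on the attracting subspace $\mathrm{Fix}(\mathbb{Z}_2)$, where $\rho_j=c_j/e_j$ exactly.

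The substantive issue is your final sentence, where you assert that ``Part (a) then follows.'' Your (correct) equivalence $\rho>1\iff\frac{b-2(\epsilon+2q)}{b+2(\epsilon+2q)}>\frac{(a+2\epsilon)^2(b+2\epsilon)}{(a-2\epsilon)^2(b-2\epsilon)}$ holds \emph{uniformly} in all four sign regimes, because every quantity divided by is strictly positive throughout the region of Lemma \ref{lema saddles}. Hence in the mixed regime $0<\epsilon<\min\{\frac{a}{2},\frac{b}{2}\}$, $-\frac{b}{2}<\epsilon+2q<0$, asymptotic stability still requires the ``$>$'' inequality, whereas the third clause of part (a) as printed demands ``$<$'', and symmetrically the third clause of part (b) demands ``$>$'' where your computation gives $\rho>1$, which by Krupa--Melbourne cannot yield complete instability. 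In other words, your argument --- which is the paper's own argument carried to completion --- establishes the theorem with the inequalities in the third clauses of (a) and (b) interchanged, and contradicts the statement verbatim. You should say this explicitly rather than hide it under ``the two mixed regimes requiring the ratio test'': either flag the printed third clauses as a sign error to be corrected, or concede that your proof does not deliver the literal statement. Everything else in the proposal is sound.
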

\begin{proof}

The stability is expressed by
\begin{equation}\label{stability krupa1}
\rho=\prod_{j=1}^4\rho_j
\end{equation}
where
\begin{equation}\label{stability krupa2}
\rho_j=\mathrm{min}\{c_j/e_j,1-t_j/e_j\}.
\end{equation}
In equation \eqref{stability krupa2}, $e_i$ is the expanding eigenvector at the $i$th point of the cycle, $-c_i$ is the contracting eigenvector and $t_i$ is the tangential eigenvector of the linearization.
For the heteroclinic cycle we have
\begin{equation}\label{rho values1}
\begin{array}{l}
\rho_1=\displaystyle{\frac{b-2(\epsilon+2q)}{a+2\epsilon}}~~
\rho_2=\displaystyle{\frac{a-2\epsilon}{b+2(\epsilon+2q)}}~~
\rho_3=\displaystyle{\frac{a-2\epsilon}{b+2\epsilon}}~~
\rho_4=\displaystyle{\frac{b-2\epsilon}{a+2\epsilon}},
\end{array}
\end{equation}
so from equation \eqref{stability krupa1} we obtain
\begin{equation}\label{rho values2}
\begin{array}{l}
\rho=\displaystyle{\frac{\left[b-2(\epsilon+2q)\right](a-2\epsilon )^2(b-2\epsilon)}{\left[b+2(\epsilon+2q)\right](a+2\epsilon )^2(b+2\epsilon)}}.
\end{array}
\end{equation}
Then the proof follows by applying Theorem $2.4$ in \cite{Krupa}.
\end{proof}\\
\paragraph{\bf Acknowledgements}
The author would like to thank the referee for indications which improved the presentation of this paper. He also acknowledges financial support from FCT grant $SFRH/ BD/ 64374/ 2009.$

\end{document}